\title{Optimal control on finite graphs: asymptotic optimal controls and ergodic constant in the case of entropic costs\thanks{The author would like to thank Philippe Bergault (Université Paris 1 Panthéon-Sorbonne), Diogo Gomes (Kaust), and Jean-Michel Lasry (Ceremade and Institut Louis Bachelier) for the discussions he had with them on the subject.}}
\author{Olivier Guéant\thanks{Université Paris 1 Panthéon-Sorbonne. Centre d'Economie de la Sorbonne. 106, Boulevard de l'Hôpital, 75013 Paris.}}
\date{}
\newtheorem{theorem}{Theorem}
\newtheorem{lemma}{Lemma}
\def\blfootnote{\xdef\@thefnmark{}\@footnotetext}
\begin{document}

\maketitle

\abstract{For optimal control problems on finite graphs in continuous time, the dynamic programming principle leads to value functions characterized by systems of nonlinear ordinary differential equations. In this paper, we consider the case of entropic costs for which the nonlinear differential equations can be transformed into linear ones thanks to a change of variables linked to the classical duality between entropy and exponential. When the graph is connected, we show that the asymptotic optimal control and the ergodic constant can be computed very easily with classical tools of matrix analysis.}

\vspace{3mm}

\noindent \textbf{Key words:} Optimal control, Graphs, Asymptotic analysis, Ergodic constant, Matrix analysis.

\setlength{\parindent}{0em}

\section{Introduction}

Continuous-time optimal control theory is often presented in the case of a continuous state space. The case of a discrete state space in which the agent controls the intensity of point processes has drawn less attention from academics beyond the seminal work of Brémaud (see for instance \cite{bremaud1981point}). In the recent paper \cite{gueant2020cocv}, general results have been presented in the case of finite graphs. In particular, results about the long-run behavior of value functions and optimal controls have been obtained that echoed existing (but usually more technical) results in the case of continuous state spaces (see also \cite{sam} for an alternative approach with backward stochastic differential equations). Beyond optimal control, the case of mean-field games on finite graphs has also been addressed in \cite{gueant2015existence}.\\

In the vast set of problems tackled thanks to the tools of optimal control theory, linear-quadratic~(LQ) ones in $\mathbb{R}^d$ are known to be among the easiest to solve because they boil down to Riccati equations. They have a lot of applications and often serve as benchmark cases to test numerical methods designed to tackle more general problems. In the case of optimal control on finite graphs in continuous time, there is no equivalent to LQ problems. The nonnegativity constraint on intensities and the intricate topology and geometry of graphs prevent indeed the emergence of phenomena similar to those obtained in Euclidean spaces.\\

There is nevertheless a family of nontrivial continuous-time optimal control problems on finite graphs for which the solution can be computed in (almost-)closed form and can therefore serve as benchmark. This family corresponds to the use of entropic costs to monitor intensities. The well-known duality between exponential and entropy leads indeed to a Hamilton-Jacobi equation for the value function that can be transformed into a linear system of ordinary differential equations. In particular, the value function and the optimal controls can be computed very easily using matrix exponentiation.\\

The use of entropic costs or entropic penalties is common in many fields. The replacement of maxima and minima by there soft versions in many optimization and machine learning problems is intimately related to the use of entropic regularization. Entropic penalties in optimal transport are very common and key to use the famous Sinkhorn's algorithm (see \cite{nutz}). To manage uncertainty in the value of the parameters in optimization and optimal control problems, entropic penalties are a classical tool. In finance, entropic penalties have been introduced to choose amongst all pricing (martingale) measures the ``closest'' one to a reference one, typically the historical probability measure (see also the link with utility-based pricing in \cite{frittelli, rouge}). Another example with mean field games can be found in \cite{gomes}. Many other instances could be cited. Behind entropic penalties and their virtues almost always lie the convex duality between exponential and entropy.\\

Independently of any entropic regularization consideration, optimal control on graphs with entropic costs naturally appeared in \cite{glft} when addressing the market making problem presented in the famous paper \cite{as} by Avellaneda and Stoikov. Market making and more generally inventory management problems are naturally written in the form of optimal control on graphs (see \cite{cartea2015algorithmic, gueant2016financial} for a general presentation on market making). When trading intensities depend exponentially on prices as in the early market making models \cite{as, glft}, the costs turn out to correspond to entropic costs.\\

In this paper, we propose a general study of optimal control problems on finite graphs with entropic costs. In particular, we show that the asymptotic behavior of the value function and the optimal controls obtained in \cite{gueant2020cocv} using comparison principles and semi-group tools on Hamilton-Jacobi equations can be obtained, in the case of a connected graph and entropic costs, thanks to classical results of matrix analysis. Furthermore, we show that the ergodic constant and the asymptotic optimal controls can be computed very easily by finding the largest real eigenvalue of a matrix and an associated eigenvector.\\ 

In Section 2 we introduce the notations and the family of optimal control problems considered throughout the paper. In Section 3, we solve in (almost-)closed form the Hamilton-Jacobi equations associated with these optimal control problems and derive both the value functions and the optimal controls. In Section~4, in the particular case of a connected graph, we study the asymptotic behavior of the value function and optimal controls and derive spectral characterizations of both the ergodic constant and the asymptotic optimal controls.\\

\section{Optimal control problems on graphs with entropic costs}
\label{notation}
Let $T>0$. Let $\left(\Omega,\left(\mathcal{F}_{t}\right)_{t\in [0,T]},\mathbb{P}\right)$
be a filtered probability space, with $\left(\mathcal{F}_{t}\right)_{t\in [0,T]}$
satisfying the usual conditions. We assume that the stochastic processes introduced in this paper are defined on~$\Omega$ and adapted to the filtration $\left(\mathcal{F}_{t}\right)_{t\in [0,T]}$.\\

We consider a finite directed graph $\mathcal{G}$ with no self-loop (\emph{i.e.} there is no edge connecting a node to itself). The set of nodes is denoted by $\mathcal{N} = \{1, \ldots, N\}$ where $N \ge 2$ is an integer. For each node $i \in \mathcal{N}$, we introduce $\mathcal{V}(i) \subset \mathcal{N} \setminus \{i\}$ the neighborhood of the node $i$, \emph{i.e.} the set of nodes $j$ for which a directed edge exists from $i$ to $j$.\\

The graph is said to be connected whenever $\forall i,j \in \mathcal N, \exists K \ge 2, \exists i_1, \ldots, i_K \in \mathcal N$, such that $i_1 = i$, $i_K = j$, and $\forall k \in \{1, \ldots, K-1\}, i_{k+1} \in \mathcal{V}(i_k)$.\\

We consider an agent evolving on the graph $\mathcal{G}$. This agent can choose in continuous time the values of the transition intensities, \emph{i.e.} instantaneous probabilities of transition. At any time $t \in [0,T]$, transition intensities are described by a collection of feedback control functions ${(\lambda_t(i, \cdot))}_{i \in \mathcal{N}}$ where $\lambda_t(i, \cdot): \mathcal{V}(i)\rightarrow \mathbb{R}_+$. We assume that the controls are in the admissible set $\mathcal{A}^T_0$ where, for $t\in [0,T]$,
$$
    \mathcal{A}^T_t= \{(\lambda_s(i, j))_{s \in [t, T], i \in \mathcal{N}, j \in \mathcal{V}(i)} \text{ deterministic,  nonnegative,}$$$$ \text{ such that }\forall i \in \mathcal{N}, \forall j \in \mathcal{V}(i), s \mapsto \lambda_s(i, j) \in L^{1}(t, T)\}.$$

We assume that the instantaneous cost of the agent located at node $i$ is described by a function\footnote{Throughout this paper, the function $x \in \mathbb{R}_+^* \mapsto x \log(x)$ is prolonged by continuity to $x=0$ (by the value $0$).} $$L(i, \cdot): \left(\lambda_{ij}\right)_{j \in \mathcal{V}(i)} \in {\mathbb{R}}^{|\mathcal{V}(i)|}_+ \mapsto L\left(i, \left(\lambda_{ij}\right)_{j \in \mathcal{V}(i)}\right) = - r(i)  + \sum_{j \in \mathcal{V}(i)} \lambda_{ij} \log\left(\frac{\lambda_{ij}}{\bar{\lambda}_{ij}}\right) + \bar\lambda_{ij} - \lambda_{ij}  ,$$ where $|\mathcal{V}(i)|$ is the cardinal of $\mathcal{V}(i)$, $r: \mathcal{N} \rightarrow \mathbb{R}$ a (real) reward function, and $(\bar{\lambda}_{ij})_{i \in \mathcal{N}, j \in \mathcal{V}(i)}$ a family of positive real numbers. For each $i \in \mathcal N$, $r(i)$ can be seen as the instantaneous reward of being in state $i$. The other terms corresponds to the cost paid to monitor moves on the graph. In fact, the transition intensity from node $i$ to node $j$ is $\bar{\lambda}_{ij}$ for each $i \in \mathcal{N}$ and each $j \in \mathcal{V}(i)$ if the agent pays no cost. That rate can be changed upon the payment of a positive cost by the agent and the form of the cost is called entropic for its similarity with entropy (although the sum of intensities is unconstrained).\\

At time $T$, we consider a terminal reward for the agent. This reward depends on her position in the graph and is modelled by a function $g: \mathcal{N} \rightarrow \mathbb{R}$.\\

Let us denote by $(X_s^{t, i, \lambda})_{s \in [t, T]}$ the continuous-time Markov chain on $\mathcal{G}$ starting from node $i$ at time~$t$, with transition intensities given by $\lambda \in \mathcal{A}^T_t$.\\

Starting from a given node $i$ at time $0$, the optimal control problem we consider is the following:\\
\begin{align}
\label{controlpbm}
    & \sup_{\lambda \in \mathcal{A}^T_0} \mathbb{E}\left[ -\int_0^T L\left(X^{0,i,\lambda}_t, \left(\lambda_t\left(X^{0,i,\lambda}_t, j\right)\right)_{j \in \mathcal{V}\left(X^{0,i,\lambda}_t\right)}\right)dt + g\left(X^{0,i,\lambda}_T\right)\right].
\end{align}

For each node $i \in \mathcal{N}$, the value function of the agent, at state $i$, is defined as
\begin{align*}
    & u^T_i(t) = \sup_{\lambda \in \mathcal{A}^T_t} \mathbb{E}\left[ -\int_t^T L\left(X_s^{t, i, \lambda}, \left(\lambda_s\left(X_s^{t, i, \lambda}, j\right)\right)_{j \in \mathcal{V}\left(X^{t,i,\lambda}_s\right)}\right)ds + g\left(X_T^{t, i, \lambda}\right)\right].
\end{align*}

Our goal in the next Section is to compute that value function in (almost-)closed form and to deduce the optimal controls.\\

\section{Solution of the associated Hamilton-Jacobi equation and derivation of the optimal controls}

The Hamilton-Jacobi equation associated with the above optimal control problem is
\begin{equation}
\label{HJ_1}
    \forall i \in \mathcal{N},\quad \frac{d}{dt}{V^T_i}(t) = - \sup_{\left(\lambda_{ij}\right)_{j \in \mathcal{V}(i)} \in \mathbb{R}^{|\mathcal{V}(i)|}_+} \left( \left(\sum_{j \in \mathcal{V}(i)}\lambda_{ij}\left(V^T_j(t) - V^T_i(t)\right) \right) - L\left(i, \left(\lambda_{ij}\right)_{j \in \mathcal{V}(i)}\right)\right),
\end{equation}
with terminal condition
\begin{align*}
    \forall i \in \mathcal{N}, \quad V^T_i(T) = g(i).
\end{align*}

Let us define for all $i \in \mathcal{N}$ the Hamiltonian function associated with the cost function $L(i, \cdot)$:
\begin{eqnarray*}
H(i,\cdot):& \mathbb{R}^{|\mathcal{V}(i)|} & \to \mathbb{R}\\
 &(p_{ij})_{j \in \mathcal{V}(i)}  &\mapsto \sup_{\left(\lambda_{ij}\right)_{j \in \mathcal{V}(i)} \in {\mathbb{R}}^{|\mathcal{V}(i)|}_+}  \left(\left(\sum_{j \in \mathcal{V}(i)} \lambda_{ij} p_{ij}\right) - L\left(i, \left(\lambda_{ij}\right)_{j \in \mathcal{V}(i)}\right)\right).
\end{eqnarray*}

Given the specific entropic form chosen for the cost functions, it is straightforward to compute the Hamiltonian functions in closed form. This is the purpose of the following lemma.

\begin{lemma}
\label{Hform}
$\forall i \in \mathcal{N}, \forall p = (p_{ij})_{j \in \mathcal{V}(i)} \in \mathbb{R}^{|\mathcal{V}(i)|}$,
$$H(i, p) = r(i) + \sum_{j \in \mathcal{V}(i)} \bar\lambda_{ij} (e^{p_{ij}}-1).$$
Moreover, the supremum in the definition of $H(i, p)$ is a maximum, reached when
$$\forall j \in \mathcal{V}(i), \quad \lambda_{ij} = \bar\lambda_{ij} e^{p_{ij}}.$$
\end{lemma}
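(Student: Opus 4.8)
The plan is to compute the supremum defining $H(i,p)$ directly, exploiting the separable structure of the cost function $L(i,\cdot)$. First I would substitute the explicit form of $L$ into the definition of $H(i,p)$ and factor out the constant $r(i)$, which does not depend on the controls. This leaves the supremum over $(\lambda_{ij})_{j \in \mathcal{V}(i)} \in \mathbb{R}^{|\mathcal{V}(i)|}_+$ of the quantity $\sum_{j \in \mathcal{V}(i)} \left(\lambda_{ij}(p_{ij}-b_{ij}) - \lambda_{ij}\log\lambda_{ij}\right)$. Since the summand attached to each neighbor $j$ depends only on the single variable $\lambda_{ij}$, and the constraint $\lambda_{ij}\ge 0$ is itself separable, the supremum of the sum equals the sum of the suprema, and the problem reduces to $|\mathcal{V}(i)|$ independent one-dimensional optimizations.

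Next I would analyze, for fixed real numbers $p$ and $b$, the scalar function $\varphi(\lambda) = \lambda(p-b) - \lambda\log\lambda$ on $[0,+\infty)$, with the convention $0\log 0 = 0$. I would observe that $\varphi$ is continuous on $[0,+\infty)$, differentiable on $(0,+\infty)$ with $\varphi'(\lambda) = (p-b) - \log\lambda - 1$ and $\varphi''(\lambda) = -1/\lambda < 0$, hence strictly concave. Because the term $-\lambda\log\lambda$ dominates the linear term as $\lambda \to +\infty$, one has $\varphi(\lambda)\to -\infty$, so the supremum of $\varphi$ over $[0,+\infty)$ is attained. Solving the first-order condition $\varphi'(\lambda)=0$ then gives the unique interior maximizer $\lambda^\star = e^{-1-b}e^{p}$, which indeed lies in $(0,+\infty)$.

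It remains to evaluate $\varphi$ at $\lambda^\star$. Using $\log\lambda^\star = p - b - 1$, a direct computation gives $\varphi(\lambda^\star) = \lambda^\star(p-b) - \lambda^\star(p-b-1) = \lambda^\star = e^{-1-b}e^{p}$. Applying this with $p=p_{ij}$ and $b=b_{ij}$ for each $j \in \mathcal{V}(i)$, summing, and reinstating the factored constant $r(i)$ yields $H(i,p) = r(i) + \sum_{j\in\mathcal{V}(i)} e^{-1-b_{ij}}e^{p_{ij}}$, with the componentwise maximizer $\lambda_{ij} = e^{-1-b_{ij}}e^{p_{ij}}$, establishing that the supremum is in fact a maximum.

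I do not expect a genuine obstacle here, as this is the standard exponential/entropy duality. The only points requiring mild care are justifying that the supremum is attained rather than merely approached — handled by the strict concavity of $\varphi$ together with its downward divergence at infinity — and treating the boundary at $\lambda = 0$ correctly through the continuity convention $0\log 0 = 0$, which guarantees that the interior critical point genuinely dominates the boundary value $\varphi(0)=0$.
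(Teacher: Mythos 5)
Your proposal is correct and follows essentially the same route as the paper: exploit concavity, solve the first-order condition $p_{ij} - \log(\lambda_{ij}) - 1 - b_{ij} = 0$ to get the maximizer $\lambda_{ij} = e^{-1-b_{ij}}e^{p_{ij}}$, and substitute back to obtain $H(i,p) = r(i) + \sum_{j \in \mathcal{V}(i)} e^{-1-b_{ij}}e^{p_{ij}}$. The only difference is cosmetic: you decompose the problem into independent scalar optimizations and justify attainment via coercivity ($\varphi(\lambda) \to -\infty$) and the boundary check at $\lambda = 0$, whereas the paper works directly with the concave multivariate function and concludes from the vanishing gradient — your treatment is a touch more explicit on why the supremum is attained, but the underlying argument is identical.
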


\begin{proof}
Let us consider $i \in \mathcal{N}$ and $p = (p_{ij})_{j \in \mathcal{V}(i)} \in \mathbb{R}^{|\mathcal{V}(i)|}$.\\

The function $$\left(\lambda_{ij}\right)_{j \in \mathcal{V}(i)} \in {\mathbb{R}}^{|\mathcal{V}(i)|}_+ \mapsto  \left(\sum_{j \in \mathcal{V}(i)} \lambda_{ij} p_{ij}\right) - L\left(i, \left(\lambda_{ij}\right)_{j \in \mathcal{V}(i)}\right)$$ is concave. Its gradient vanishes whenever
$$\forall j \in \mathcal{V}(i),\quad  p_{ij} - \log\left(\frac{\lambda_{ij}}{\bar \lambda_{ij}}\right) - 1 + 1 = 0.$$
Therefore, the supremum in the definition of $H(i, p)$ is in fact a maximum, reached when
$$\forall j \in \mathcal{V}(i), \quad \lambda_{ij} = \bar \lambda_{ij} e^{p_{ij}},$$
and we obtain
\begin{eqnarray*}
H(i, p) &=& r(i) + \sum_{j \in \mathcal{V}(i)} \bar \lambda_{ij} (e^{p_{ij}}-1).
\end{eqnarray*}

\end{proof}

Using the above lemma, we see that the Hamilton-Jacobi equation \eqref{HJ_1} associated with Problem \eqref{controlpbm} writes
\begin{equation}
\label{HJ}
    \forall i \in \mathcal{N},\quad \frac{d}{dt}{V^T_i}(t) = - r(i) - \sum_{j \in \mathcal{V}(i)} \bar \lambda_{ij}\left( \exp\left(V^T_j(t) - V^T_i(t)\right) - 1 \right),
\end{equation}
with terminal condition
\begin{align}
\label{terminal condition}
    \forall i \in \mathcal{N}, \quad V^T_i(T) = g(i).
\end{align}

Eq. \eqref{HJ} with terminal condition \eqref{terminal condition} can be solved in (almost-)closed form. This is the purpose of the next theorem.

\begin{theorem}
\label{solution_vf}
Let $R = \text{diag}(r(1), \ldots, r(N))$ be the diagonal matrix of instantaneous rewards. Let $\bar \Lambda$ be the infinitesimal generator of the Markov chain associated with the intensities $\left(\bar \lambda_{ij}\right)_{i \in \mathcal N, j \in \mathcal{V}(i)}$, i.e.
$$\bar \Lambda_{ij} = \left\{
  \begin{array}{ll}
    \bar \lambda_{ij}, & \text{if } j \in \mathcal{V}(i),\\
    - \sum_{k \in \mathcal V (i)}\bar \lambda_{ik} , & \text{if } j = i,\\
    0, & \text{otherwise.}
  \end{array}
\right.
$$
Let us define $B = R + \bar \Lambda$.\\

Let $\frak{g}$ be the column vector $(e^{g(1)}, \ldots, e^{g(N)})'$.\footnote{$'$ designates the transpose operator throughout this paper.}\\

Then, the function $w^T : t \in [0,T] \mapsto w^T(t) = e^{B(T-t)}\frak{g}$ verifies $$\forall i \in \mathcal{N}, \forall t\in [0,T], \quad w_i^T(t) > 0$$ and
$$v^T: t \in [0,T] \mapsto \left(\log(w_i^T(t))\right)_{i \in \mathcal N}$$ defines a solution to Eq. \eqref{HJ} with terminal condition \eqref{terminal condition}.
\end{theorem}

\begin{proof}
Let us consider a constant $\sigma > -\min_{i \in \mathcal{N}} r(i)$. By definition, $B + \sigma I_N$ is a nonnegative matrix and so is $e^{(B + \sigma I_N)(T-t)} - I_N$ for all $t\in [0,T]$. Since $\frak{g}$ has positive coefficients, the vector $e^{\sigma(T-t)} w^T(t) = e^{(B + \sigma I_N)(T-t)}\frak{g} = \frak{g} + \left(e^{(B + \sigma I_N)(T-t)} - I_N\right) \frak{g}$ has positive coefficients for all $t\in [0,T]$. We deduce that 
$$\forall i \in \mathcal{N}, \forall t\in [0,T], \quad w_i^T(t) > 0.$$

From the above positiveness result, $v^T$ is well defined and we have for all $i \in \mathcal{N}$,
\begin{eqnarray*}
\frac{d}{dt}{v^T_i}(t) &=& \frac1{{w^T_i}(t)} \frac{d}{dt}{w^T_i}(t)\\
&=& \frac1{{w^T_i}(t)}\left( - r(i) {w^T_i}(t) - \sum_{j \in \mathcal{V}(i)} \bar \lambda_{ij} ({w^T_j}(t) - {w^T_i}(t)) \right)\\
&=& -r(i) - \sum_{j \in \mathcal{V}(i)} \bar \lambda_{ij} \left(\frac{{w^T_j}(t)}{{w^T_i}(t)} - 1\right)\\
&=& -r(i) - \sum_{j \in \mathcal{V}(i)} \bar \lambda_{ij} \left(\exp\left(v^T_j(t) - v^T_i(t)\right) - 1\right).\\
\end{eqnarray*}
Because for all $i \in \mathcal N$, $v_i^T(T) = \log(e^{g(i)}) = g(i)$, $v^T$ defines a solution to Eq. \eqref{HJ} with terminal condition \eqref{terminal condition}.\\
\end{proof}

By using a standard verification argument, we obtain from Lemma \ref{Hform} and Theorem \ref{solution_vf} the solution to Problem \eqref{controlpbm}:\\

\begin{theorem}
\label{theocontrol}
We have:\\
\begin{itemize}
\item $\forall i \in \mathcal N, \forall t  \in [0,T], u^T_i(t) = v^T_i(t) = \log(w^T_i(t))$.\\
\item The optimal controls for Problem \eqref{controlpbm} are given in feedback form by:
$$\forall i \in \mathcal{N}, \forall j \in \mathcal{V}(i), \forall t \in [0,T], \quad \lambda^{T*}_t(i,j) = \bar \lambda_{ij} \frac{w^T_j(t)}{w^T_i(t)}.$$
\end{itemize}
\end{theorem}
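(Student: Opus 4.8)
The plan is to run a standard verification argument. Theorem \ref{solution_vf} already hands us a smooth, globally positive $w^T$ solving the linear system, hence a genuine solution $v^T = (\log w^T_i)_i$ of the Hamilton--Jacobi equation \eqref{HJ} with terminal condition \eqref{terminal condition}. So the real content is to show that this particular solution \emph{is} the value function and that the feedback control built from the pointwise maximizer of Lemma \ref{Hform} is optimal. First I would check that the candidate $\lambda^{T*}$ is admissible: since $w^T$ solves a constant-coefficient linear ODE, each $w^T_i$ is smooth and, by Theorem \ref{solution_vf}, strictly positive on the compact interval $[0,T]$, so for every $i$ and $j \in \mathcal{V}(i)$ the map $s \mapsto e^{-1-b_{ij}} w^T_j(s)/w^T_i(s)$ is continuous, nonnegative and bounded, whence $\lambda^{T*} \in \mathcal{A}^T_t$.

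Next, fix an arbitrary admissible $\lambda \in \mathcal{A}^T_t$, write $X_s = X_s^{t,i,\lambda}$, and apply Dynkin's formula to $s \mapsto v^T_{X_s}(s)$ along the controlled chain, which gives
\[
\mathbb{E}\!\left[v^T_{X_T}(T)\right] - v^T_i(t) = \mathbb{E}\!\left[\int_t^T \left(\tfrac{d}{ds}v^T_{X_s}(s) + \sum_{j \in \mathcal{V}(X_s)} \lambda_s(X_s,j)\big(v^T_j(s) - v^T_{X_s}(s)\big)\right) ds\right].
\]
By the very definition of $H$ together with \eqref{HJ}, for every node $k$, every time $s$, and every nonnegative vector $(\lambda_{kj})_j$ one has
\[
\tfrac{d}{ds}v^T_k(s) + \sum_{j \in \mathcal{V}(k)} \lambda_{kj}\big(v^T_j(s) - v^T_k(s)\big) \le L\big(k, (\lambda_{kj})_j\big),
\]
with equality exactly when $\lambda_{kj} = e^{-1-b_{kj}} w^T_j(s)/w^T_k(s)$, i.e. at the maximizer of Lemma \ref{Hform}. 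Substituting this inequality into the Dynkin identity and using $v^T_{X_T}(T) = g(X_T)$ yields $v^T_i(t) \ge \mathbb{E}[g(X_T) - \int_t^T L\, ds]$ for every admissible $\lambda$, hence $v^T_i(t) \ge u^T_i(t)$.

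To close the loop I would plug in $\lambda = \lambda^{T*}$: by the equality case above the inequality becomes an identity, so $v^T_i(t) = \mathbb{E}[g(X_T^{*}) - \int_t^T L\, ds]$ for the chain driven by $\lambda^{T*}$, giving $v^T_i(t) \le u^T_i(t)$. Combining the two bounds proves simultaneously that $u^T_i(t) = v^T_i(t) = \log(w^T_i(t))$ and that $\lambda^{T*}$ attains the supremum, which is the full statement of Theorem \ref{theocontrol}.

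The main obstacle is the rigorous justification of Dynkin's formula, and in particular that the compensated jump martingale has zero expectation rather than being merely a local martingale. This rests on integrability bookkeeping: $v^T$ is bounded on $[0,T] \times \mathcal{N}$ (a continuous function on a compact interval over a finite node set), so the jump increments $v^T_j(s) - v^T_k(s)$ are uniformly bounded, while admissibility forces $\int_t^T \lambda_s(X_s, j)\, ds < \infty$; together these rule out explosion in finite time and upgrade the local martingale to a true martingale. One should also confirm that $\mathbb{E}[\int_t^T L\, ds]$ is well defined so the comparison is between finite quantities; the entropic term $\lambda \log \lambda$ is the delicate one, but the lower-bound direction only uses the inequality, and for the optimizer $\lambda^{T*}$ the integrand is continuous and bounded, so no difficulty arises there.
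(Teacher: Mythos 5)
Your proof is correct and takes essentially the same route as the paper: the paper's entire proof of this theorem is the single remark that the result follows ``by using a standard verification argument'' from Lemma \ref{Hform} and Theorem \ref{solution_vf}, and your proposal is precisely that verification argument written out in full. The details you supply -- admissibility of the feedback control via positivity and continuity of $w^T$ on $[0,T]$, Dynkin's formula, the Hamiltonian inequality with equality exactly at the maximizer from Lemma \ref{Hform}, and the integrability bookkeeping (boundedness of $v^T$, $L^1$ intensities, $L$ bounded below) that upgrades the compensated jump local martingale to a true martingale -- are exactly what the paper leaves implicit, and you handle them correctly.
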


\section{Ergodic constant and asymptotic optimal controls}
\label{lt}

The behavior of value functions when the time horizon $T$ tends to $+\infty$ is a classical topic in the optimal control literature. In $\mathbb{R}^d$, there is indeed an extensive literature on the long-run behavior of solutions of Hamilton-Jacobi equations (see for instance \cite{bs, fathi, namah}). In the case of connected graphs, the long-run behavior of value functions and the existence of asymptotic optimal controls have been studied in \cite{gueant2020cocv} with tools inspired from the literature on Hamilton-Jacobi equations (in particular ideas related to semi-groups). In general, when they exist, the ergodic constant and the asymptotic optimal controls can only be found numerically, using finite difference schemes most of the time. However, given the expression of $w^T$, the asymptotic analysis of Problem \eqref{controlpbm} can be carried out independently of the existing literature, with spectral tools, as soon as $\mathcal G$ is connected. This is the purpose of the following theorem.\\

\begin{theorem} 

If $\mathcal G$ is connected, then the real spectrum $\text{Sp}_{\mathbb R}(B)$ of the matrix $B$ is a nonempty set and $\gamma = \max \text{Sp}_{\mathbb R}(B)$ is an algebraically simple eigenvalue whose associated eigenspace is directed by a column vector with positive coefficients, hereafter denoted by $f$.\\

$\gamma$ is the ergodic constant associated with Problem \eqref{controlpbm} and
$$\exists \alpha \in \mathbb R, \forall i \in \mathcal{N}, \forall t \in \mathbb{R}, \quad  \lim_{T \to + \infty} u_i^T(t) - \gamma(T-t) = \alpha + \log(f_i).$$

The resulting asymptotic behavior of the optimal quotes is given by
$$\forall i \in \mathcal{N}, \forall j \in \mathcal{V}(i), \forall t \in \mathbb{R}, \quad \lim_{T \to + \infty} \lambda^{T*}_t(i,j) = \bar\lambda_{ij} \frac{f_j}{f_i}.$$
\end{theorem}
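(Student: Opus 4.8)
The plan is to recognize the matrix $B$, up to a scalar shift, as an irreducible nonnegative matrix and then to read off every claim from the Perron--Frobenius theorem together with the standard spectral decomposition of the matrix exponential $e^{B\tau}$.

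First I would fix $\sigma > -\min_{i \in \mathcal{N}} r(i)$ and set $A = B + \sigma I_N$. By the very definition of $B$, the matrix $A$ is entrywise nonnegative, and its off-diagonal support coincides with the edges of $\mathcal{G}$, in the sense that $A_{ij} > 0$ for $i \neq j$ exactly when $j \in \mathcal{V}(i)$. The connectedness assumption then says precisely that $A$ is irreducible. The Perron--Frobenius theorem yields that the spectral radius $\rho = \rho(A) > 0$ is an algebraically simple eigenvalue of $A$ admitting a strictly positive right eigenvector $f$ (and a strictly positive left eigenvector $\ell$), and that every other eigenvalue $\mu$ of $A$ satisfies $\mathrm{Re}(\mu) < \rho$. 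Translating back through $B = A - \sigma I_N$ and setting $\gamma = \rho - \sigma$, I obtain at once that $\gamma \in \text{Sp}_{\mathbb R}(B)$ (so the real spectrum is nonempty), that $\gamma$ is algebraically simple with the same positive eigenvector $f$, and that $\mathrm{Re}(\mu) < \gamma$ for every eigenvalue $\mu \neq \gamma$ of $B$; in particular $\gamma = \max \text{Sp}_{\mathbb R}(B)$.

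Next I would exploit the closed form $w^T(t) = e^{B(T-t)}\frak{g}$ from Theorem \ref{solution_vf}. Writing $\tau = T-t$ and $u_i^T(t) = \log(w_i^T(t))$, I have
$$u_i^T(t) - \gamma(T-t) = \log\left(\left(e^{(B - \gamma I_N)\tau}\,\frak{g}\right)_i\right).$$
Because $\gamma$ is the unique eigenvalue of $B$ of maximal real part and is algebraically simple, the matrix $B - \gamma I_N$ has $0$ as an algebraically simple eigenvalue with all other eigenvalues of strictly negative real part; hence $e^{(B-\gamma I_N)\tau}$ converges, as $\tau \to +\infty$, to the rank-one spectral projector $P = \frac{f\ell'}{\ell' f}$ onto the Perron eigenline. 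Consequently $e^{(B-\gamma I_N)\tau}\frak{g} \to P\frak{g} = \frac{\ell'\frak{g}}{\ell' f}\, f$, a vector with strictly positive entries since $f$, $\ell$ and $\frak{g}$ are all positive. Taking logarithms gives the announced limit with $\alpha = \log\!\left(\frac{\ell'\frak{g}}{\ell' f}\right)$, manifestly independent of $i$ and $t$, which also identifies $\gamma$ as the ergodic constant. For the optimal controls I would plug the same convergence into the formula $\lambda^{T*}_t(i,j) = e^{-1-b_{ij}} \frac{w^T_j(t)}{w^T_i(t)}$ of Theorem \ref{theocontrol}; the common scalar $\frac{\ell'\frak{g}}{\ell' f}$ cancels in the ratio, leaving $\lim_{T\to+\infty}\lambda^{T*}_t(i,j) = e^{-1-b_{ij}}\frac{f_j}{f_i}$.

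The step I expect to be the main obstacle is the passage from ``$\gamma$ is the largest \emph{real} eigenvalue'' to ``$\gamma$ has strictly larger real part than every other, possibly complex, eigenvalue'', because this strict dominance is exactly what forces $e^{(B-\gamma I_N)\tau}$ to \emph{converge} rather than merely stay bounded or oscillate. This is where irreducibility (strong connectivity) is indispensable: for an imprimitive irreducible matrix several eigenvalues can lie on the circle of radius $\rho$, and one must invoke the Perron--Frobenius description of this peripheral spectrum as $\rho\, e^{2\pi \mathrm{i} k/h}$ to verify that the associated real parts $\rho\cos(2\pi k/h)$ are all strictly below $\rho$ for $k \neq 0$. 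Once this is secured, the remainder is routine matrix-exponential asymptotics.
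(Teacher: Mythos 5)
Your proof is correct and follows essentially the same route as the paper: shift $B$ by $\sigma I_N$ to obtain an irreducible nonnegative matrix, apply Perron--Frobenius, and split $\frak{g}$ into its Perron component plus a decaying remainder --- your spectral projector $P = f\ell'/(\ell' f)$ is exactly the paper's Jordan-decomposition step $\frak{g} = \beta f + h$ with $\beta = \ell'\frak{g}/(\ell' f) > 0$ established there via orthogonality to the left eigenvector. If anything you are more explicit than the paper on the one delicate point (why the non-Perron part of $e^{(B-\gamma I_N)\tau}$ decays, i.e.\ why all other eigenvalues have real part strictly below $\gamma$), which you settle via the peripheral spectrum $\rho e^{2\pi i k/h}$; note that an even shorter route is available here, since the choice $\sigma > -\min_i r(i)$ makes the diagonal of $B+\sigma I_N$ strictly positive, hence the matrix primitive, so all non-Perron eigenvalues have modulus strictly less than $\rho$.
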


\begin{proof}
Let us consider a constant $\sigma > -\min_{i \in \mathcal{N}} r(i)$. Then, let us denote by $B(\sigma)$ the nonnegative matrix $B + \sigma I_N$.\\

The matrix $\tilde{B}(\sigma)$ defined by $$\forall (i,j) \in \mathcal{N}^2, \qquad \tilde{B}_{ij}(\sigma) = 1_{B_{ij}(\sigma) \neq 0}$$ is the adjacency matrix of a connected graph (the graph $\mathcal G$ to which self-loops have been added). Therefore, $\tilde{B}(\sigma)$ is an irreducible matrice, and so is $B(\sigma)$.\\

By Perron-Frobenius theorem, the spectral radius $\rho(\sigma)$ of $B(\sigma)$ is an algebraically simple eigenvalue of $B(\sigma)$ and the associated eigenspace is directed by a column vector $f$ with positive coefficients.\\

In particular $\text{Sp}_{\mathbb R}(B)$ is a nonempty set and its maximum $\gamma$, equal to $\rho(\sigma) -  \sigma$, is an algebraically simple eigenvalue of $B$ whose associated eigenspace is also directed by $f$.\\

Similarly, $\rho(\sigma)$ is an algebraically simple eigenvalue of $B(\sigma)'$ and the associated eigenspace is directed by a column vector $\phi$ with positive coefficients.\\

Using a Jordan decomposition of $B(\sigma)$, we see that $\frak{g}$ can be written as $\beta f + h$ where $\beta \in \mathbb{R}$ and $h \in \text{Im}(B(\sigma) - \rho(\sigma) I_N)$.\\

Since $\text{Im}(B(\sigma) - \rho(\sigma) I_N) = \text{Ker}(B(\sigma)' - \rho(\sigma) I_N)^\perp = \text{span}(\phi)^\perp$, we have $\frak g - \beta f \perp \phi$. All the coefficients of $\frak g$, $f$, and  $\phi$ being positive, we must have $\beta > 0$.\\

Now,
$$w^T(t) = e^{B(T-t)}\frak g = e^{-\sigma(T-t)}e^{B(\sigma)(T-t)}\frak g  = \beta e^{(\rho(\sigma) - \sigma)(T-t)} f + e^{-\sigma(T-t)} e^{B(\sigma)(T-t)}h.$$
Therefore,
$$ e^{-\gamma(T-t)} w^T(t) = e^{-(\rho(\sigma) - \sigma)(T-t)} w^T(t) = \beta f + e^{(B(\sigma) - \rho(\sigma) I_N) (T-t)}h \to_{T \to +\infty} \beta f.$$

By taking logarithms, we obtain that
$$\forall i \in \mathcal{N}, \quad  \lim_{T \to + \infty} u_i^T(t) - \gamma(T-t) = \log(\beta) + \log(f_i),$$
hence the result for $\alpha = \log(\beta)$. In particular, $\gamma = \lim_{T \to +\infty} \frac{u_i^T(t)}T$ (independently of $t$), and $\gamma$ is therefore the ergodic constant associated with Problem \eqref{controlpbm}.\\

For optimal controls, we obtain
\begin{eqnarray*}
\forall i \in \mathcal{N}, \forall j \in \mathcal{V}(i), \forall t \in [0,T], \quad \lambda^{T*}_t(i,j) &=& \bar{\lambda}_{ij} \frac{w^T_j(t)}{w^T_i(t)}\\
&=& \bar{\lambda}_{ij} \frac{e^{-\gamma(T-t)}w^T_j(t)}{e^{-\gamma(T-t)}w^T_i(t)} \to_{T \to +\infty} \bar{\lambda}_{ij} \frac{f_j}{f_i},
\end{eqnarray*} hence the result.
\end{proof}

This theorem states in particular that solving the ergodic problem associated with Problem \eqref{controlpbm} when $\mathcal{G}$ is connected simply boils down to finding the largest real eigenvalue -- and an associate eigenvector -- of a matrix that depends on the structure of the graph $\mathcal{G}$ and the parameters defining the reward/cost functions $(L(i, \cdot))_{i \in \mathcal N}$.\\

\bibliographystyle{plain}

\begin{thebibliography}{10}

\bibitem{as}
Marco Avellaneda and Sasha Stoikov.
\newblock High-frequency trading in a limit order book.
\newblock {\em Quantitative Finance}, 8(3):217--224, 2008.

\bibitem{bs}
Guy Barles and Panagiotis E. Souganidis.
\newblock On the large time behavior of solutions of Hamilton-Jacobi equations.
\newblock {\em SIAM Journal on Mathematical Analysis}, 31(4):925--939, 2000.


\bibitem{bremaud1981point}
Pierre Br{\'e}maud.
\newblock {\em Point processes and queues: martingale dynamics}, volume~50.
\newblock Springer, 1981.

\bibitem{cartea2015algorithmic}
{\'A}lvaro Cartea, Sebastian Jaimungal, and Jos{\'e} Penalva.
\newblock {\em Algorithmic and high-frequency trading}.
\newblock Cambridge University Press, 2015.

\bibitem{sam}
Samuel N. Cohen and Ying Hu.
\newblock Ergodic BSDEs driven by Markov chains.
\newblock {\em SIAM Journal on Control and Optimization}, 51(5):4138--4168, 2013.

\bibitem{fathi}
Albert Fathi.
\newblock Sur la convergence du semi-groupe de Lax-Oleinik.
\newblock {\em Comptes Rendus de l'Académie des Sciences-Series I-Mathematics}, 327(3):267--270, 1998.

\bibitem{frittelli}
Marco Frittelli.
\newblock The minimal entropy martingale measure and the valuation problem in incomplete markets.
\newblock {\em Mathematical Finance}, 10(1):39--52, 2000.

\bibitem{gomes}
Diogo A. Gomes, Joana Mohr and Rafael R. Souza.
\newblock Discrete time, finite state space mean field games.
\newblock {\em Journal de mathématiques pures et appliquées}, 93(3):308--328, 2010.

\bibitem{gueant2015existence}
Olivier Gu{\'e}ant.
\newblock Existence and uniqueness result for mean field games with congestion
  effect on graphs.
\newblock {\em Applied Mathematics \& Optimization}, 72(2):291--303, 2015.

\bibitem{gueant2016financial}
Olivier Gu{\'e}ant.
\newblock {\em The Financial Mathematics of Market Liquidity: From optimal
  execution to market making}, volume~33.
\newblock CRC Press, 2016.

\bibitem{glft}
Olivier Gu{\'e}ant, Charles-Albert Lehalle, and Joaquin Fernandez-Tapia.
\newblock Dealing with the inventory risk: a solution to the market making problem.
\newblock {\em Mathematics and financial economics}, 7(4):477--507, 2013.


\bibitem{gueant2020cocv}
Olivier Gu{\'e}ant and Iuliia Manziuk.
\newblock Optimal control on graphs: existence, uniqueness, and long-term behavior.
\newblock {\em ESAIM: Control, Optimisation \& Calculus of Variations}, 26:1--18, 2020.


\bibitem{namah}
Gawtum Namah and Jean-Michel Roquejoffre.
\newblock Remarks on the long time behaviour of the solutions of Hamilton-Jacobi equations.
\newblock {\em Communications in partial differential equations}, 24(5-6):883--893, 1999.

\bibitem{nutz}
Marcel Nutz.
\newblock Introduction to Entropic Optimal Transport. 2021.

\bibitem{rouge}
Richard Rouge and Nicole El Karoui.
\newblock Pricing via utility maximization and entropy.
\newblock {\em Mathematical Finance}, 10(2):259--276, 2000.

\end{thebibliography}

\section*{Compliance with Ethical Standards}

\begin{itemize}
  \item Conflict of Interest: The author declares that he has no conflict of interest associated with this work.
  \item Funding: The author declares that there was no funding associated with this work.
\end{itemize}

\end{document}